\newcommand{\supp}{\mathop\mathrm{supp}}   
\newcommand{\dom}{\mathop\mathrm{dom}}   
\newcommand{\ssupp}{\mathop{\mathrm{\underline{supp}}}}  
\newcommand{\R}{\mathds{R}}
\newcommand{\CC}{\mathcal{C}}
\newcommand{\co}{\mathrm{co}}
\newcommand{\A}{\mathbf{A}}
\newcommand{\B}{\mathbf{B}}
\newcommand{\G}{\mathbf{G}}
\renewcommand{\H}{\mathbf{H}}
\newcommand{\F}{\mathscr{F}}
\newcommand{\x}{\mathbf{x}} 
\newtheorem{theorem}{Theorem}[section]
\newtheorem{proposition}[theorem]{Proposition}
\newtheorem{conjecture}[theorem]{Conjecture}
\theoremstyle{definition}
\newtheorem{definition}[theorem]{Definition}
\newtheorem{example}[theorem]{Example}
\newtheorem{remark}[theorem]{Remark}
\numberwithin{equation}{section}
\begin{document}
\setcounter{page}{1}

\title{Applications and Issues in Abstract Convexity}
\author[R. D\'iaz Mill\'an, N. Sukhorukova and J. Ugon]{R. D\'iaz Mill\'an$^{1}$, Nadezda Sukhorukova$^{2,*}$,  Julien Ugon$^{1}$}
\date{January 2022}

\vspace*{2.0cm}

\maketitle
\vspace*{-0.6cm}

\begin{center}
{\footnotesize

$^1$Faculty of Science, Engineering and Built Environment , Deakin University, Australia\\
$^2$Department of Mathematics, Swinburne University of Technology, Australia

}\end{center}

\vskip 4mm {\footnotesize \noindent {\bf Abstract.} The theory of abstract convexity, also known as convexity without linearity, is an extension of the classical convex analysis. There are a number of remarkable results, mostly concerning duality, and some numerical methods, however, this area has not found many practical applications yet. In this paper we study the application of abstract convexity to function approximation. Another important research direction addressed in this paper is the connection with the so-called axiomatic convexity.

 \noindent {\bf Keywords.} Abstract convexity, axiomatic convexity, Chebyshev approximation, Carath\'eodory number, quasiaffine functions.

 \noindent {\bf 2010 Mathematics Subject Classification.}  90C26,
 90C90,              
 90C47,            
 65D15.              
}

\renewcommand{\thefootnote}{}
\footnotetext{ $^*$Corresponding author.
\par
E-mail addresses:  r.diazmillan@deakin.edu.au (R. D\'iaz Mill\'an), nsukhorukova@swin.edu.au (N. Sukhorukova), julien.ugon@deakin.edu.au (Julien Ugon).
\par
Received ; Accepted . }

\maketitle

\section{Introduction}
Abstract convexity appears as a natural extension of the classical convex analysis when linear and affine functions (elementary functions) are ``replaced'' by other types of functions. This is why abstract convexity is also known as ``convexity without linearity''.

It was demonstrated that many results from the classical convex analysis: conjugation, duality, subdifferential related issues can be extended to ``non-linear'' settings~\citep{pallaschke1997Rolewicz,singerAbstract}.  In~\citep{pallaschke1997Rolewicz} the authors provide a very detailed historical review of the theory and it turned out that the origins of this environment comes back to early 70s, while some specific issues were already in place in the 50s of the twentieth century.

Despite a very productive work done in the development of abstract convexity, there are still several direction for improvements. From the onset of abstract convex theory, much effort has been devoted to study duality, leading to very elegant generalisations of classical convexity results. On the other hand, the geometric aspects of convexity have not received as much attention. Additionally, the theory remains ``abstract'' and needs practical applications. One of the goals of this paper is to provide some applications and develop a framework on the properties that the elementary functions have to satisfy to be efficient in applications.

There are a number of difficulties to develop efficient computational methods. The first step is to choose the set of elementary functions. This set should be simple enough to construct fast and efficient methods. On another hand, this set has to generate accurate approximations to the functions from the applications.  In some cases, the choice of elementary functions is reasonably straightforward (see section~\ref{sec:approximation}), but this is rather exceptional and in most applications the choice of a suitable class of elementary functions is a difficult task. 

A deep and detailed study on abstract convexity and related issues can be found in~\citep{singerAbstract} and also in~\citep{Rubinov00}. A detailed survey of  methods of abstract convex programming can be found in~\citep{AndramonovSurvey}. Many more contributions and developments and results on duality and methods are not mentioned in this paper, but the most recent state of the developments in abstract convexity and a comprehensive literature review can be found in~\citep{KrugerBuiBurachikYost}.

The paper is organised as follows. In section~\ref{sec:AbstractConvexity} we provide the essential background of abstract convexity and axiomatic convexity. In the same section, we underline connections and possible cross-feeding between these two types of convexity. Then, in section~\ref{sec:approximation} we illustrate how abstract convexity and axiomatic convexity can be applied to function approximation. Section~\ref{sec:numerical_experiments} illustrates the results of numerical experiments. Finally, section~\ref{sec:conc} provides conclusions and future research directions.

\section{Abstract Convexity}\label{sec:AbstractConvexity}

\subsection{Definitions and preliminaries}
Consider the set of all real-valued functions acting on the domain $X$, denoted by $\F$ and defined by $\F:=\{f:X\to \R\}$. We called the set of \emph{abstract linear functions} to any subset of functions $L\subseteq \F$. Following, we introduce some known definitions to state the abstract convexity concept. Suppose that we defined the set of abstract linear functions $L\subseteq \F$.


\begin{definition}
The vertical closure of $L$, $H := \{l+c: l\in L, c\in \R\}$ is the set of \emph{abstract affine functions}.
\end{definition}

  \begin{definition}[Abstract Convexity~\citep{Rubinov00}]\label{def:abstractConvex}
    A function $f$ is said to be \emph{$L$-convex} if there exists a set $U\subset H$ such that for any $x\in X$, $f(x) = \sup_{u\in U} u(x)$. Such functions are also called \emph{abstract convex with respect to $L$}.
  \end{definition}
  In this paper we denote the set of all $L$-convex functions by $\F_L$.

  \begin{definition}[Support Set~\citep{Rubinov00}]\label{def:support_set}
  The \emph{$L$-support set} of a function $f\in F$ is defined as:\\
  $ \supp_{L} f := \{l\in L: l\leq f\},$ 
  where $l\leq f$ means that $l(x)\leq f(x), \forall x\in X$.  The support sets of $L$-convex functions are called \emph{$L$-convex sets}.
  \end{definition}
  

Therefore, informally, convexity without linearity simply means that the ``role'' of linear functions of the classical convex analysis is held by functions from a certain class. The first (and for some applications this is the most important) question is how to choose the set of elementary function.

The next definition, which will be detailed and used in the next subsection, ``helps" the chosen of a ``better" set of abstract linear functions. 

\begin{definition}[Supremal generator~\citep{dutta2005abstract, Rubinov00}]
Let $\F$  be  a  set  of  functions defined  on  a  set~$X$.    A  set $L\subset \F$ is called  a \emph{supremal  generator} of $\F$ if  each  $f\in \F$ is abstract  convex with respect  to~$L$.
\end{definition}

\subsection{Quasiconvexity}

The notion of quasiconvexity plays an essential role in abstract convexity. We are now going to explain why.

The notion of quasiconvexity was originally introduced in~\citep{deFinetti1949}, where the author studied the behaviour of functions with convex sublevel sets, but the term quasiconvexity was introduced much later. 
\begin{definition}
 Let $D$ be a convex subset of $\R^n$. A function $ f:D\to \R$ is \emph{quasiconvex} if and only if its  sublevel set
	$$ S_{\alpha}=\{x\in D\, |\, f (x ) \leq \alpha \} $$
is  convex for any $\alpha \in \mathbb{R}$. 
\end{definition}
There are several equivalent definitions of quasiconvex functions, but the one we use is convenient for our study. 

\begin{definition}
A function $f:D\to \R$, where $D$ is a convex subset of $\R^n$,  is called quasiconcave if $-f$ is quasiconvex.
\end{definition}
\begin{definition}
Functions that are both quasiconvex and quasiconcave are called quasiaffine (quasilinear). 
\end{definition}
Note the following important observations.
\begin{itemize}
    \item Quasiconvex functions do not need to be continuous. 
\item In the case of univariate functions, quasiaffine functions are monotone functions.
\end{itemize}

 If a function is quasiaffine on~$\R^n$ (unconstrained problems) then,  from the definition, that its level sets must be half-spaces, and it is clear that the hyperplanes defining these half-spaces need to be parallel. In the presence of constraints, this observation is not valid and we provide an example in section~\ref{sec:numerical_experiments}.

 There are many studies dedicated to quasiconvex functions and quasiconvex  optimisation~\citep{SL,JPCrouzeix1980quasi,DaniilidisHadjisavvasMartinezLegas2002,dutta2005abstract,Rubinov00,RubinovSimsek}. In these studies, the notion of quasiconvexity appears as one of the possible generalisations of convexity.

\subsection{Quasiconvex functions and supremal generators}

Theorem~7.13 from~\citep{dutta2005abstract} states the following.
\begin{theorem}
 The set of all lower semicontinuous quasiaffine functions forms a supremal generator of the set of all lower semicontinuous quasiconvex functions.
\end{theorem}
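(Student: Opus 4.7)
The plan is to verify both inclusions, with the bulk of the work in one direction. The easy inclusion says that any pointwise supremum of lsc quasiaffine functions $\{g_i\}_{i\in I}$ is itself lsc (the sup of lsc functions is lsc) and quasiconvex, since the $\gamma$-sublevel set of $\sup_i g_i$ is the intersection $\bigcap_i \{x : g_i(x) \le \gamma\}$ of convex sets, hence convex. This confirms that the sup-closure of the class of lsc quasiaffine functions lies inside the class of lsc quasiconvex functions.

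The substantive direction is to show that every lsc quasiconvex $f:D\to\R$, with $D\subseteq\R^n$ convex, equals the pointwise supremum of its lsc quasiaffine minorants. It suffices to construct, for each $x_0\in D$ and each $\alpha<f(x_0)$, an lsc quasiaffine function $g$ with $g\le f$ and $g(x_0)\ge\alpha$; taking the sup over all such minorants then reproduces $f$.

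Given such $x_0$ and $\alpha$, the sublevel set $S_\alpha:=\{x:f(x)\le\alpha\}$ is closed (by lsc of $f$) and convex (by quasiconvexity), and $x_0\notin S_\alpha$. The case $S_\alpha=\emptyset$ is handled by the constant function $g\equiv\alpha$. Otherwise, I would invoke Hahn--Banach separation to obtain a closed half-space $\overline{H}=\{x:\ell(x)\le c\}$ with $S_\alpha\subseteq\overline{H}$ and $\ell(x_0)>c$, and then define
\[
 g(x) := \begin{cases} a, & \ell(x)\le c, \\ b, & \ell(x)>c, \end{cases}
\]
where $a:=\inf_{\overline{H}} f$ and $b:=\inf_{\overline{H}^c} f$. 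The verification then splits into: (i) $g\le f$ pointwise, which is immediate from the definition of $a,b$ as infima on the two half-spaces; (ii) $g(x_0)=b\ge\alpha$, because $\overline{H}^c\cap S_\alpha=\emptyset$ forces $f>\alpha$ on all of $\overline{H}^c$; and (iii) $g$ is lsc quasiaffine.

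The hardest point is (iii), and in particular the lower semicontinuity. The sub- and super-level sets of $g$ are always one of $\emptyset$, $\overline{H}$, $\overline{H}^c$, or $\R^n$, all convex, so $g$ is quasiaffine essentially by inspection. For lsc, the sublevel sets must be closed, and this is where the construction could easily fail: a two-valued function whose smaller value sits on the \emph{open} side of a half-space is not lsc. The saving observation is that, because $S_\alpha\subseteq\overline{H}$ is nonempty, picking any $y\in S_\alpha$ gives $a\le f(y)\le\alpha$, while $b\ge\alpha$ by (ii); hence $a\le\alpha\le b$. Consequently the sublevel sets of $g$ are only $\emptyset$, the closed half-space $\overline{H}$, or $\R^n$, never the open half-space $\overline{H}^c$, so they are all closed. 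This ordering of the two infima is exactly the crux of the argument; once it is in hand, the theorem follows.
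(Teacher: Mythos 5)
The paper does not actually prove this statement: it is quoted verbatim as Theorem~7.13 of the cited survey by Dutta et al., so there is no in-paper argument to compare against. Your proof is the standard self-contained argument for this result and is essentially correct. The reduction to producing, for each $x_0$ and each $\alpha<f(x_0)$, an lsc quasiaffine minorant of $f$ exceeding $\alpha$ at $x_0$ is the right skeleton; the strict separation of $x_0$ from the closed convex sublevel set $S_\alpha$ is the right tool; and you correctly isolate the crux, namely that $a=\inf_{\overline H}f\le\alpha\le\inf_{\overline H^c}f=b$ is exactly what makes the two-valued step function lower semicontinuous (smaller value on the closed side of the half-space). Two points should be tightened. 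First, the definition $a:=\inf_{\overline H}f$ can give $a=-\infty$ for a perfectly good real-valued lsc quasiconvex $f$ that is unbounded below on $\overline H$ (already $f(x)=x$ on $\R$ does this), in which case your $g$ is not real-valued. This is harmless in the extended-real-valued setting in which the cited theorem is actually formulated -- a function equal to $-\infty$ on a closed half-space and constant on its complement is still lsc and quasiaffine there -- but it clashes with this paper's convention $\F=\{f:X\to\R\}$; either state that you work in $\overline{\R}$, or replace the constant $a$ on $\overline H$ by a nondecreasing lsc function of the separating functional $\ell$ that minorizes $f$ there. Second, $S_\alpha$ is closed in $\R^n$ only when the domain $D$ is closed; for general convex $D$ you should separate $x_0$ from $\overline{S_\alpha}$, noting that lower semicontinuity of $f$ at $x_0$ still guarantees $x_0\notin\overline{S_\alpha}$. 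With those repairs the argument is complete.
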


Essentially, this theorem states that in the case of quasiconvex minimisation problems, quasiaffine functions ``replace'' the role of linear functions in classical convex settings. Therefore, the choice of elementary functions is clear (all quasiaffine functions).

One main advantage is that the sublevel sets of ``new'' elementary functions are half-spaces (similar to classical convex analysis). In most practical problems, there are some specific characteristics for the quasiaffine approximations (for example, the class of rational or generalised rational approximations, approximations that are compositions of certain monotone or non-decreasing function with certain affine or quasiaffine functions, etc.). This additional information helps dealing with some specific applications.

In the rest of this section, we clarify the ``roles'' of quasiaffine functions in the new abstract convex settings. Then section~\ref{sec:numerical_experiments} contains the results of computational experiments and illustrations.

\subsection{Relation to Axiomatic Convexity}

\emph{Axiomatic Convexity} aims at generalising the notion of convexity using only set-theoretic definitions. Good reviews can be found in~\citep{soltan84,vandevel93}. Given a set $X$, a family $\CC$ of subsets of $X$ is called a \emph{convexity} if it follows the following axioms:

\begin{enumerate}
  \item $\emptyset$ and $V$ belong to $\CC$;
  \item Any arbitrary intersection of sets from $\CC$ is in $\CC$: if $D\subset \CC$, \[\bigcap_{A\in D} A \in \CC\]
  \item The nested union of sets from $\CC$ is in $\CC$: given an a family of sets $D\subset \CC$ that is totally ordered by inclusion then \[\bigcup_{A\in D} A \in \CC.\]
\end{enumerate}
The pair $(X,\CC)$ is called a \emph{convex structure} (or \emph{(abstract) convex space}).

These axioms (in particular axioms 1 and 2) ensure the existence of the convex hull of any set $S\subset X$, namely the smallest convex set containing $S$:
\[ \co_{\CC} S = \bigcap \{A\in \CC: S\subset A\}. \]

If $F$ is finite, then $\co_{\CC} F$ is called a \emph{polyhedron}.

Axiomatic Convexity, also called \emph{abstract convexity} by~\citet{vandevel93}, has found applications in combinatorial geometry~\citep{duchet87,levi51}, among others. A family $\CC$ that satisfies the first two axioms above is called a \emph{closure space}, or $\cap$-stable~\citep{fan63}.

There are clear connections between abstract convexity as defined by Definition~\ref{def:support_set} and axiomatic convexity. Indeed, the notion of abstract convex function (Definition~\ref{def:abstractConvex}) first appeared in~\citep{fan63} in the context of closure spaces. The families of sets considered in \citep{fan63} were the closure of the set of sublevel sets of a given family of functions.

Abstract convex functions (Definition~\ref{def:abstractConvex}) and Abstract convex sets (Definition~\ref{def:support_set}) were then formally introduced by~\citet{kutateladze72}, who showed the equivalence between these sets and the ones considered by \citet{fan63}. Below we formalise some  results on the correspondence between abstract convex sets and closure spaces. Let us start with noting that the support set of any (not necessarily $L$-convex) function is $L$-convex. Indeed, the support set of $f$ is also the support set of its $L$-convex envelope, which is convex.

\begin{proposition}\label{prop:closure_space}
  Consider a family of functions $L$. Then, the set of $L$-convex sets form a closure space. Furthermore, every closure space is isomorphic to such a set.
\end{proposition}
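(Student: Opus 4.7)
The argument splits into two halves matching the two assertions of the proposition.

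For the first assertion, the plan is to verify the closure-space axioms directly from Definition~\ref{def:support_set}. Both $L$ and $\emptyset$ are handled at the boundary: $L$ arises as $\supp_L f$ for any pointwise upper bound $f$ of the family $L$ (when such an $f$ lies in $\F$), and otherwise via the standard convention that the empty intersection equals the whole space; similarly $\emptyset$ is $L$-convex whenever some $f\in\F$ fails to dominate any $l\in L$ at some point. The substantive step is stability under arbitrary intersection. If $S_\alpha=\supp_L f_\alpha$, then the identity
\[
\bigcap_\alpha S_\alpha=\{l\in L:l\le f_\alpha\ \text{for all }\alpha\}=\{l\in L:l\le \inf_\alpha f_\alpha\}=\supp_L(\inf_\alpha f_\alpha)
\]
immediately exhibits the intersection as a support set. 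The only delicate point is that $\inf_\alpha f_\alpha$ may fail to be real-valued; one convenient way to sidestep this is to replace $\inf_\alpha f_\alpha$ by $\sup_{l\in\cap_\alpha S_\alpha}l$, which lies in $\F$ whenever the intersection is nonempty and has the same support set.

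For the second assertion, the plan is to realise an arbitrary closure space $(X,\CC)$ as an abstract convex system for a concrete family of functions. I would let the functions act on $\CC$ itself and, for each $x\in X$, define the indicator-style map
\[
l_x:\CC\to\R,\qquad l_x(A)=\begin{cases}0,&x\in A,\\1,&x\notin A,\end{cases}
\]
then set $L=\{l_x:x\in X\}$. Expanding $l_x\le f$ pointwise yields the two clauses $f\ge 0$ everywhere on $\CC$ and $x\in C_f:=\bigcap\{A\in\CC:f(A)<1\}$; axiom~2 for $\CC$ (together with axiom~1 to handle the empty intersection) guarantees that $C_f\in\CC$. Hence every support set is of the form $\{l_x:x\in B\}$ for some $B\in\CC$ (or $\emptyset$, itself in $\CC$ by axiom~1, when $f$ dips below zero somewhere). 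In the reverse direction, any $B\in\CC$ is realised by the function $f_B$ taking the value $0$ at $B$ and $1$ elsewhere on $\CC$, because then $\supp_L f_B=\{l_x:x\in B\}$. The assignment $x\mapsto l_x$ then delivers the desired isomorphism between $(X,\CC)$ and $(L,\{L\text{-convex subsets of }L\})$.

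The main obstacle will be the bookkeeping required when $\CC$ fails to separate points, for then $x\mapsto l_x$ is not injective. My preferred remedy is to enlarge the functions' domain from $\CC$ to $\CC\sqcup X$ and stipulate additionally that $l_x(x')=0$ iff $x=x'$ (with the definition on $\CC$ unchanged); this forces the $l_x$ to be distinct while leaving the support-set computation intact up to minor bookkeeping. Alternatively, one can first pass to the $\CC$-indistinguishability quotient of $X$, on which the original construction works without modification and produces the same lattice of convex sets.
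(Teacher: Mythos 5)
Your proof is correct, and the first half is essentially the paper's argument: both of you reduce stability under arbitrary intersections to the identity $\bigcap_\alpha \supp_L f_\alpha = \supp_L(\inf_\alpha f_\alpha)$ and dispose of $\emptyset$ and $L$ as support sets of degenerate suprema (the paper uses $\sup_{l\in\emptyset}l$ and $\sup_{l\in L}l$; your extra care about $\inf_\alpha f_\alpha$ failing to be real-valued is a refinement the paper skips by appealing to the $L$-convex envelope). Where you genuinely diverge is the second half. The paper realises a closure space $(X,\CC)$ by taking $L=\{i_A:A\in\CC\}$, the $\{0,+\infty\}$-valued indicators of the convex sets, acting on $X$; it then observes that suprema of such indicators are again indicators of intersections and declares the isomorphism ``evident'' --- but the resulting correspondence $B\mapsto\supp_L i_B=\{i_A:A\supseteq B\}$ is order-reversing and lives on the ground set $\CC$ rather than $X$. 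You instead dualise: your functions $l_x$ are indexed by points of $X$ and act on $\CC$, and the resulting map $B\mapsto\{l_x:x\in B\}$ is an order-preserving bijection implemented by the ground-set map $x\mapsto l_x$, so you get a more literal isomorphism of closure spaces; you also correctly identify that injectivity of $x\mapsto l_x$ fails when $\CC$ does not separate points and supply a workable repair, a point the paper's construction silently avoids because distinct sets always have distinct indicators. A further small advantage of your version is that the $l_x$ are real-valued and hence actually belong to $\F$ as the paper defines it, whereas the paper's $i_A$ take the value $+\infty$. The one place to tighten your write-up: expanding $l_x\le f$ gives $f(A)\ge 0$ only for those $A$ containing $x$, not everywhere on $\CC$; the case analysis still closes (if $f<0$ somewhere the support set is empty, otherwise it is $\{l_x : x\in\bigcap\{A : f(A)<1\}\}$), but the clause as stated is not the exact equivalence.
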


\begin{proof}
  Consider a family of functions $L$ and let $\CC$ be the set of $L$-convex sets. We first show that $L$ is closed with respect to inclusion. Indeed, let $D\subset \CC$ be an arbitrary set of $L$-convex sets. For each $A\in D$, let $f_A(x) = \sup_{l\in A} l(x)$ and consider the function $f(x) = \inf_{A\in D}f_A(x)$. We will show that $\cap_{A\in D} A = \supp f$, which implies that $\cap_{A\in D} A$ is $L$-convex (note that $\supp f$ is the support set of the lower convex envelope of $f$).

    \begin{align*}
      \supp f &= \{l\in L: l(x) \leq \inf_{A\in D}f_A(x) \forall x\in X \} \\&= \{l\in L: l(x)\leq f_A(x) \forall A\in D, x\in X\} \\&= \cap_{A\in D}\{l\in L: l(x)\leq f_A(x), \forall x\in X\} = \cap_{A\in D} A.
    \end{align*}

  The first axiom is clearly verified by $\CC$ from the fact that $\emptyset$ is the support set of $\sup_{l\in \emptyset} l$ and $L$ is the support set of $\sup_{l\in L} l$.

  The second part of the statement is obtained by considering the set $L$ of indicator functions of the sets in the closure space. More precisely, let $\CC$ be a closure space, and consider the set $L=\{i_A: A\in \CC\}$ where \[i_A(x) = \begin{cases}
    0 & \text{if } x\in A\\
    +\infty &\text{otherwise}
    \end{cases}\]
    Then, $L$ is also the set of $L$-convex functions. Indeed, let $D\subset \CC$. Define $U = \{i_A: A\in D\} \subset L$ and $f=\sup_{l\in U} l = \sup_{A\in D} i_A$. Then $f(x) = 0$ if and only if $x\in A$ for all $A\in D$. That is, $f= i_{\cap_{A\in D} A}$. The isomorphism between the set of support sets of functions from $L$ and $\CC$ is evident.
\end{proof}

Note that in practice it is not necessary to consider the entire set of indicator functions, but only the set of indicator functions of a \emph{basis} of $\CC$ (i.e., a set of functions whose closure is $\CC$), which corresponds to a suppremal generator of the corresponding abstract functions. It can also easily be seen that the set of domains of the $L$-abstract functions defined as in the above proof, as well as the set of 0-sublevel sets, is precisely $\CC$.

On the other hand, not all families of abstract convex sets are a convexity structure.

\begin{example}
  Let $L$ be the set of linear functions on $\R$, and consider the functions $f_\alpha$ defined by $f_\alpha(x) = \alpha \lvert x\rvert$, for $\alpha>0$. These functions are $L$-convex, and induce a nested sequence of $L$-convex sets $S_\alpha = \supp_L f_\alpha = \{x\to \beta x: \beta\in [-\alpha,\alpha]\}$.

  However, \[U = \bigcup_{\alpha < 1} S_\alpha = \{x\to \beta x: \beta \in (-1,1)\}\]
  is not a $L$-convex set. Indeed, $f_1 = \sup_{l\in U} l$ is the smallest $L$-convex function whose support set contains $U$, but the function $x\to x\in \supp f_1\setminus U$, which shows that $U$ cannot be the support set of $f_1$ and therefore of any $L$-convex function. This shows that the set of $L$-convex sets is not a convex structure.

\end{example}

Let us introduce the notion of strict support set, which will enable us to address this issue.

  The $L$-strict support set of a function $f$ is defined as:
  \[ \ssupp f = \{l\in L: l(x)<f(x), \forall x\in \dom(f)\}. \]

We define the family of sets $\CC_L$ as follows:

\begin{definition}[convexity extension]
\begin{align*}
    \CC_L^f &:=\{A\in L: \ssupp f\subseteq A\subseteq \supp f\}\\
   \CC_L &:= \bigcup_{f\in L} \CC_L^f.
\end{align*}
We call the set $\CC_L$ the \emph{convexity extension} of the set of support sets of $L$-convex functions.
\end{definition}

\begin{proposition}\label{prop:convexity_space}
  For any family of functions $L$, the convexity extension of $L$, $\CC_L$,  forms a convexity structure.
\end{proposition}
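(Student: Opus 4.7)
The plan is to verify each of the three axioms of a convex structure for $\CC_L$ in turn: membership of $\emptyset$ and of the ambient set $L$, closure under arbitrary intersections, and closure under nested (totally ordered) unions.

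The first axiom is handled by two extremal choices of witness function. Taking $f \equiv -\infty$ gives $\supp f = \ssupp f = \emptyset$, so $\emptyset \in \CC_L^f \subseteq \CC_L$; taking $f \equiv +\infty$ (so that $\dom f = \emptyset$ and both support conditions hold vacuously for every $l \in L$) gives $\supp f = \ssupp f = L$, so $L \in \CC_L^f \subseteq \CC_L$.

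For the intersection axiom, I would adapt the argument template from Proposition~\ref{prop:closure_space}. Given $\{A_i\}_{i \in I} \subset \CC_L$ with associated witnesses $f_i$ (i.e.\ $\ssupp f_i \subseteq A_i \subseteq \supp f_i$), set $f := \inf_i f_i$. The same calculation as in the earlier proposition gives $\supp f = \bigcap_i \supp f_i \supseteq \bigcap_i A_i$. Simultaneously, any $l$ with $l(x) < f(x) = \inf_i f_i(x)$ for all $x$ satisfies $l(x) < f_i(x)$ for every $i$ and $x$, so $l \in \bigcap_i \ssupp f_i \subseteq \bigcap_i A_i$. Combining these inclusions, $\bigcap_i A_i \in \CC_L^f$.

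The nested union axiom is the delicate step, and I expect it to be the main obstacle. Given a chain $D \subset \CC_L$ with union $U := \bigcup_{A \in D} A$, the natural candidate is $f(x) := \sup_{l \in U} l(x)$, which immediately yields $U \subseteq \supp f$. For the converse inclusion $\ssupp f \subseteq U$, I would first, for each $A \in D$, replace its original witness by the canonical $L$-convex envelope $f_A(x) := \sup_{l \in A} l(x)$; this shrinks $\ssupp f_A$ while preserving $A \subseteq \supp f_A$, so it remains a valid witness. The family $\{f_A\}_{A \in D}$ is then monotone along the chain and $f = \sup_{A \in D} f_A$. It would then suffice to show that any $l$ with $l < f$ pointwise is strictly majorised by some single $f_A$, which gives $l \in \ssupp f_A \subseteq A \subseteq U$. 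The hard part will be precisely this final uniformization step — upgrading pointwise strict inequality against a directed supremum to strict inequality against a single chain element — which is where the total ordering on $D$ and the use of the strict support $\ssupp$ (rather than $\supp$) must enter essentially.
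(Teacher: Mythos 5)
Your treatment of the first two axioms matches the paper's almost exactly: the paper gets axiom 1 by observing that $\CC_L$ contains all $L$-convex sets and invoking Proposition~\ref{prop:closure_space}, and its intersection argument is the same sandwich $\ssupp f \subseteq \bigcap_i \ssupp f_i \subseteq \bigcap_i A_i \subseteq \bigcap_i \supp f_i = \supp f$ with $f = \inf_i f_i$ that you give. Your normalisation of the witnesses to the canonical $f_A = \sup_{l\in A} l$ in the third axiom is actually a small improvement on the paper, which asserts the monotonicity $f_A \leq f_{A'}$ along the chain without noting that this requires choosing the witnesses canonically.

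The problem is that you stop exactly where the proof has to be finished. For the nested union $U$ you establish $U \subseteq \supp f$ with $f = \sup_{A\in D} f_A$, but you leave the reverse inclusion $\ssupp f \subseteq U$ as an explicitly acknowledged open step. That step is the entire content of the third axiom; without it you have an outline, not a proof. The paper closes it by arguing the contrapositive: if $u \notin S$ then $u \notin \ssupp f_A$ for every $A$ (since $\ssupp f_A \subseteq A \subseteq S$), and from this it asserts the existence of a \emph{single} $x$ with $u(x) \geq f_A(x)$ for all $A$ simultaneously, whence $u(x) \geq \sup_{A\in D} f_A(x) = f(x)$ and $u \notin \ssupp f$. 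Observe that this is precisely the quantifier exchange you flagged, merely in contrapositive form: for each $A$ one is only entitled to some $x_A$ with $u(x_A) \geq f_A(x_A)$, and upgrading this to one $x$ valid for all $A$ is not justified by the total ordering alone. So you have correctly located the crux of the argument, and the paper's one-line resolution of it deserves scrutiny rather than imitation; but as submitted your proposal is missing the decisive step and therefore does not yet prove the proposition.
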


\begin{proof}
  Since $\CC_L$ contains all $L$-convex sets, it contains a closure space and therefore by Proposition \ref{prop:closure_space}, it contains the sets $\emptyset$ and $X$.

    To see why the second axiom is satisfied consider an arbitrary family of sets $D\subset \CC_L$.
 For each $A\in D$, there exists a $L$-function $f_A$ such that $\ssupp f_A\subseteq A\subseteq \supp f_A$. Define $f=\co_L\inf_{A\in D} f_A$. Since $A\subseteq \supp f_A$ for any $A$, we have that
 \[
 \bigcap_{A\in D} A \subset \bigcap_{A\in D} \supp f_A = \supp f,
 \]
 where the last equality was shown in the proof of Proposition~\ref{prop:closure_space}. Additionally, we have:

    \begin{align*}
      \ssupp f &= \{l\in L: l(x) < \inf_{A\in D}f_A(x) \forall x\in X \} \\
      &\subset \{l\in L: l(x)< f_A(x) \forall A\in D, x\in X\} \\
      &= \cap_{A\in D}\{l\in L: l(x)< f_A(x)\forall x\in X\} \subset \cap_{A\in D} A.
    \end{align*}

    Therefore $\ssupp f\subseteq \cap_{A\in D} A\subseteq \supp f$, we conclude that $\cap_{A\in D} A$ is in $\CC_L$.

 Finally, to see that the third axiom is also satisfied, let $D$ be an ordered family of sets from $\CC$. For each $A\in D$, we define $f_A$ as above. The nested nature of the sets in $D$ implies that for $A,A'$ in $D$, if $A\leq_D A'$, then \begin{equation}\label{eq:totalOrder}
  f_A\leq f_{A'}.
  \end{equation}

  Let $S = \cup_{A\in D} A$ and $f=\sup_{A\in D} f_A$.

  It is clear that $S\subset \supp f$, since for any $u\in S$ there exists $A\in D$ such that $u\in \supp f_A$. Then $u\leq f_A\leq f$. Now, consider $u\notin S$. Then $u\notin \cup_{A\in D} \ssupp f_A$, and so there exists $x\in X$ such that $u(x)\geq f_A(x)$ for any $A\in D$, and therefore that $u(x)\geq \sup_{A\in D} f_A(x) = f(x)$. Therefore $u\notin \ssupp f$. This implies that $\ssupp f\subseteq S\subseteq \supp f$, and therefore $S$ is in $\CC_L$ and $\CC_L$ is a convexity structure.
\end{proof}

\begin{remark}
  Propositions~\ref{prop:closure_space} and ~\ref{prop:convexity_space} imply an equivalence relation between sets of abstract linear functions according to the convexity structure (axioms 1-3) or closure space (axioms 1,2) the induced abstract convex sets are isomorphic to.
\end{remark}

Axiomatic convexity has been applied to obtain generalisations of well known geometric results in convexity theory. Of particular interest to this paper are generalisations of Carath\'eodory's theorems. Much research has been devoted to investigating this topic. We refer to \citep{vandevel93} for a review of classical results in the area.

\begin{definition}[{\citep[Definition 1.5]{vandevel93}}]
  In a given Convexity structure $\CC$,
  a set $F$ is \emph{Carath\'eodory dependent} if \[\co_{\CC} F\subseteq \bigcup_{a\in F} \co_{\CC} (F\setminus \{a\}),\] and \emph{Carath\'eodory independent} otherwise.

  Then we can define the \emph{Carath\'eodory number} of a convex structure $\CC$ as the largest cardinality  of a Carath\'eodory independent set (i.e., any set $F$ with cardinality greater than $c(x)$ is Carath\'eodory dependent). The Carath\'eodory numbers of several important classes of convexity structures are known and discussed in the literature~\citep{vandevel93}.

\end{definition}

Generalisations of Helly's and Radon's theorems were obtained similarly. The Carath\'eodory number enables the following generalisation of Carath\'eodory's classical result:

\begin{proposition}[{\citep[Theorem 1.7]{vandevel93}}]
  Consider a convexity structure $(\CC,X)$ with Carath\'eodory number $c(x)$, and let $A\subset \CC$ be a convex set from $\CC$. Then, for any $x\in A$ there exists a set $F\subset A$ such that $\lvert F\rvert \leq c(x)$ and $x\in \co F$. This is best possible.
\end{proposition}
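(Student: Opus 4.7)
The plan is to produce $F$ via a minimum-cardinality argument driven by the definition of Carath\'eodory dependence. First, I will establish a \emph{finitary representation}: for every $x\in A$ there is a finite $F_0\subset A$ with $x\in \co_{\CC} F_0$. Since $A=\co_{\CC} A$ (as $A$ is convex), this reduces to showing
\[
 \co_{\CC} A \;=\; \bigcup\{\co_{\CC} F : F\subset A,\ F\text{ finite}\}.
\]
The inclusion $\supseteq$ is immediate; for $\subseteq$ one must verify that the right-hand side is convex (the so-called domain-finiteness or algebraic property of a convexity). This is the standard consequence of axiom~3, obtained by a transfinite/Zorn-style argument: any point of $\co_{\CC} A$ can be reached by iterating finite convex hulls through a chain well-ordered by inclusion, whose union falls within the right-hand side thanks to the nested-union axiom.

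With the finitary representation in place, I will choose $F\subset A$ of minimum cardinality subject to $x\in \co_{\CC} F$, and claim that such an $F$ must be Carath\'eodory independent. If $F$ were dependent, the very definition gives $\co_{\CC} F\subseteq \bigcup_{a\in F}\co_{\CC}(F\setminus\{a\})$; then $x\in \co_{\CC}(F\setminus\{a\})$ for some $a\in F$, contradicting the minimality of $|F|$. The bound $|F|\le c(x)$ is now immediate from the definition of the Carath\'eodory number as the supremum of cardinalities of Carath\'eodory independent sets.

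For the \emph{best possible} assertion, I will exhibit a Carath\'eodory independent set $F^\star$ with $|F^\star|=c(x)$; by independence, some $x\in\co_{\CC} F^\star$ lies in no $\co_{\CC}(F^\star\setminus\{a\})$, and a short combinatorial argument (replacing elements of an alleged smaller witness by elements of $F^\star$ using independence) shows that no set of cardinality strictly below $c(x)$ can cover $x$. This witnesses that the bound cannot be improved in general.

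The principal obstacle is step~1: axiom~3 provides convexity of unions only along \emph{totally ordered} chains, so upgrading the merely directed family of finite-hull polyhedra into a chain (or, equivalently, a well-ordered sequence of finite subsets) whose union exhausts $\co_{\CC} A$ requires a careful transfinite or Zorn-type construction. Once the finitary representation is secured, the Carath\'eodory-independence part is a brief pigeonhole-style minimality reduction with no further subtlety, and the sharpness assertion is a routine example.
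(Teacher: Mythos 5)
The paper gives no proof of this proposition (it is quoted from van de Vel), so your argument can only be assessed on its own terms. Your two main steps follow the standard route and are essentially sound as a sketch: domain-finiteness of the hull operator, $\co_{\CC} S=\bigcup\{\co_{\CC}F: F\subseteq S,\ F\ \text{finite}\}$, does follow from axiom~3, although the upgrade from totally ordered chains to the merely \emph{directed} family of finite hulls is exactly the Iwamura-type lemma on algebraic closure systems that you name as the principal obstacle but do not carry out; and the minimal-cardinality reduction (a minimum-cardinality $F$ with $x\in\co_{\CC}F$ must be Carath\'eodory independent, since dependence would let you drop an element) is correct and immediately gives $\lvert F\rvert\le c$.

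The genuine gap is in the sharpness part, and it interacts with a defect in the statement itself. As literally stated, $A$ is convex and $x\in A$, so $F=\{x\}$ always works and the bound $c$ is never best possible unless $c=1$; the intended statement (van de Vel's) takes $A$ an arbitrary subset of $X$ and $x\in\co_{\CC}A$. For that corrected version your proposed ``replacement'' argument is both unnecessary and unworkable. If the competing small sets $F$ are required to be subsets of the independent witness $F^\star$, then any $F$ with $\lvert F\rvert<\lvert F^\star\rvert$ is contained in some $F^\star\setminus\{a\}$, and monotonicity of $\co_{\CC}$ finishes the proof with no exchange of elements at all. If instead $F$ is allowed to be an arbitrary small subset of the convex set $\co_{\CC}F^\star$, the claim you want is simply false (again $F=\{x\}$), and no exchange argument can rescue it, because axiomatic convexities have no analogue of the Steinitz exchange property in general. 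You also tacitly assume that the supremum defining $c$ is attained by an independent set of cardinality exactly $c$; if it is not, sharpness must be phrased as ``no smaller number suffices,'' witnessed by independent sets of each cardinality below $c$.
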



\section{Application to approximation}\label{sec:approximation}
\subsection{Problem formulation}

We are working with uniform (Chebyshev) approximation:
\begin{equation}\label{eq:cheb_formulation}
\min_A \sup_{\x\in X}|f(\x)-g(A,\x)|,
\end{equation}
where $A$ are the decision variables and $X$ is a  compact set. In many computer based applications, this set is a finite grid defined on a convex compact set. If we define $L = \{A\to g(A,\x): x\in X\}$, then it is clear from Formulation~\eqref{eq:cheb_formulation} that the objective function of this problem is $H_L$-convex.


In the case when the approximation function~$g(A,t)$ is a polynomial (and its coefficients are subject to optimisation), the optimality conditions are based on maximal deviation points (Chebyshev theorem, see~\citep{chebyshev}). In the case of univariate function approximation, the conditions are based on the notion of alternating sequence.

\begin{theorem}[Chebyshev]\label{thm:Chebyshev}
A polynomial of degree at most~$n$ is optimal in uniform (Chebyshev) norm  if and only there are $n+2$ alternating points. 
\end{theorem}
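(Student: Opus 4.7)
The plan is to prove the two directions separately via the classical arguments dating back to Chebyshev. Write $e(x) := f(x) - p(x)$ for the error and $E := \sup_{x\in X}|e(x)|$ for its uniform norm, and assume $X$ is a real interval (the extension to finite grids is routine).

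For sufficiency, I would assume the existence of points $x_0 < x_1 < \cdots < x_{n+1}$ in $X$ with $e(x_i) = \sigma(-1)^i E$ for some fixed $\sigma\in\{-1,+1\}$, and suppose toward a contradiction that a polynomial $q$ of degree at most $n$ satisfies $\|f-q\|_\infty < E$. The identity $p - q = (f - q) - (f - p)$ shows that at every $x_i$ the value $(p-q)(x_i)$ has the sign of $-\sigma(-1)^i$, because $|(f-q)(x_i)| < E$ while $(f-p)(x_i) = \sigma(-1)^i E$. Therefore $p-q$ alternates in sign at $n+2$ consecutive points, so by the intermediate value theorem it vanishes at least $n+1$ times; being a polynomial of degree at most $n$, it must be identically zero, contradicting $\|f-q\|_\infty < E$.

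For necessity, I would argue contrapositively: if the longest alternating run of extremal points of $e$ has length $k \leq n+1$, then $p$ is not optimal. The construction is to exhibit a polynomial $r$ of degree at most $n$ with $r(x)\,e(x) > 0$ at every point where $|e(x)| = E$. To this end, partition the set of extremal points into $k$ consecutive groups on which the sign of $e$ is constant, insert separators $t_1 < \cdots < t_{k-1}$ strictly between these groups in regions where $|e| < E$, and set $r(x) := \pm\prod_{j=1}^{k-1}(x - t_j)$ with the outer sign chosen to match $e$ on the first group. Then $\deg r = k - 1 \leq n$ and $r$ has the correct sign at every extremal point.

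A perturbation argument then produces $\varepsilon > 0$ such that $\|f - (p + \varepsilon r)\|_\infty < E$, contradicting optimality. This last step is the main obstacle: by uniform continuity one isolates open neighbourhoods of the extremal set on which $e$ and $r$ keep matching signs, and compactness of $X$ ensures that $\sup|e|$ off these neighbourhoods equals some $E' < E$; choosing $\varepsilon$ small enough balances the gain of order $\varepsilon$ near the extremal points against the residual error $E'$ plus a harmless $O(\varepsilon)$ contribution elsewhere. The sufficiency direction is, by contrast, essentially immediate once the sign-alternation identity is set up.
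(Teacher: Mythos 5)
Your proof is correct, and both directions are the standard equioscillation arguments: the sign-counting via $p-q=(f-q)-(f-p)$ for sufficiency, and the separator polynomial $r=\pm\prod_j(x-t_j)$ plus a perturbation $p+\varepsilon r$ for necessity. The only points worth tightening are the degenerate case $f\in P_n$ (where $e\equiv 0$ and the alternation condition is vacuous) and, in the perturbation step, the need to choose the neighbourhoods of the extremal blocks with closures avoiding the separators $t_j$, so that $|r|$ is bounded away from zero where the gain of order $\varepsilon$ is claimed. Be aware, though, that the paper does not prove this theorem at all: it is quoted as a classical result with a citation, and the only methodological remark made is that it ``can be proved using convex analysis'', with the count $n+2$ obtained from Carath\'eodory's theorem --- i.e., the optimality condition $0\in\partial\sup_{x}|f(x)-p(x)|$ expressed as a convex combination of at most $(n+1)+1$ active generators in the coefficient space $\R^{n+1}$. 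That convex-analytic route is precisely what the authors intend to generalise (Conjecture~\ref{conjecture:caratheodori} replaces $n+2$ by $c(\CC_{H_L})+1$ for an arbitrary family $L$ of abstract linear functions), so it buys extensibility to generalised rational and quasiaffine approximation. Your elementary route buys self-containedness and, as a by-product, uniqueness of the best approximation, but it leans on polynomial root-counting (the Haar condition) in both directions and therefore does not transfer to the abstract convexity setting that is the subject of the paper.
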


Essentially, the approximations here are presented as linear combinations of basis functions. In the case of polynomials the basis functions are monomials, but other classes of basis functions can be used. The corresponding optimisation problems are convex as supremum of linear forms. Theorem~\ref{thm:Chebyshev} can be proved using convex analysis and the number of alternative point ($n+2$) obtained through Carath\'eodory's theorem. In other words, the following conjecture is true, in the case when $L$ is a set of linear functions:
\begin{conjecture}\label{conjecture:caratheodori}
  For a function from $L$ to be an optimal approximation in uniform norm, it is enough that it is optimal at $c(\CC_{H_L})+1$ extreme points.
\end{conjecture}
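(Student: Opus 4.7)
The plan is to mirror the classical proof of Theorem~\ref{thm:Chebyshev}, substituting the generalised Carath\'eodory theorem (cited from van de Vel just above the conjecture) for the Euclidean Carath\'eodory theorem. First I would reformulate global optimality of a candidate $h\in H_L$ as a statement about the set $E(h):=\{\x\in X:|f(\x)-h(\x)|=\|f-h\|_{\infty}\}$ of maximal-deviation points: by the standard move-toward-a-better-approximation argument, $h$ fails to be optimal if and only if some $h'\in H_L$ realises $\mathrm{sign}(f(\x)-h(\x))\,(h(\x)-h'(\x))>0$ at every $\x\in E(h)$, i.e.\ there is a simultaneous direction of improvement at all extreme points.

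The next step is to recast this ``improvement direction'' condition as a separation statement inside the convexity structure $\CC_{H_L}$. Each pair $(\x,\mathrm{sign}(f(\x)-h(\x)))$ with $\x\in E(h)$ defines a signed evaluation functional on $H_L$; non-existence of a strict improvement $h'$ should be equivalent to a ``no-improvement'' element lying in the $\CC_{H_L}$-convex hull of the family of signed evaluations associated with $E(h)$. This can be made concrete by expressing the offending level sets via the support and strict support sets $\supp_L$, $\ssupp_L$ (Proposition~\ref{prop:convexity_space}), which together produce an element of $\CC_{H_L}$ playing the role of a separating half-space.

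Once this equivalence is established, I would invoke the abstract Carath\'eodory theorem: if the ``no-improvement'' element lies in $\co_{\CC_{H_L}}E(h)$, then it already lies in $\co_{\CC_{H_L}}F$ for some $F\subseteq E(h)$ with $|F|\leq c(\CC_{H_L})$. A sign-balancing argument (the same bookkeeping that turns Carath\'eodory's $n+1$ into Chebyshev's $n+2$ in the polynomial case, reflecting that the ``origin'' must sit strictly inside, not merely on the boundary of, the convex hull) promotes this to $|F|\leq c(\CC_{H_L})+1$. If $h$ is optimal when restricted to every $(c(\CC_{H_L})+1)$-element subset of $E(h)$, then no such $F$ can witness an improvement, yielding the claim.

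The main obstacle will be carrying out the second step in a way that is valid for arbitrary $L$, not just linear~$L$. In the classical polynomial case one uses gradients of $g(A,\cdot)$ with respect to $A$ to land inside a finite-dimensional vector space where textbook convex separation applies. Without a linear structure on $L$, one needs an intrinsic replacement: defining exactly what the ``no-improvement element'' of $\CC_{H_L}$ is, and then verifying that the interplay between $\supp_L$ and $\ssupp_L$ produces precisely the additional~$+1$ required to recover Theorem~\ref{thm:Chebyshev} when $L$ consists of linear functions. It is this matching with the classical count, rather than the abstract Carath\'eodory step itself, that I expect to be the crux of turning the conjecture into a theorem.
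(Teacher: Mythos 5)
The statement you are proving is stated in the paper as a \emph{conjecture}: the authors give no proof and explicitly defer one to a future paper, so there is no argument of theirs to measure yours against. Judged on its own terms, your proposal is a research plan rather than a proof, and the gap sits exactly where you place it yourself. The entire argument hinges on step two --- recasting ``no simultaneous improvement direction exists at every point of $E(h)$'' as ``a distinguished no-improvement element lies in $\co_{\CC_{H_L}}E(h)$'' --- and that step is asserted, not constructed. In the classical case this is Kolmogorov's criterion together with the observation that $0$ lies in the convex hull of the signed gradients $\mathrm{sign}(f(\x)-h(\x))\,\nabla_A g(A,\x)$; the argument lives in the finite-dimensional parameter space and uses the linearity of $g$ in $A$ and the separating-hyperplane theorem in an essential way. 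The abstract Carath\'eodory theorem you cite from van de Vel provides only a hull-membership reduction ($x\in\co_\CC A$ implies $x\in\co_\CC F$ for some small $F\subseteq A$); it supplies no separation or duality statement for a general convexity structure, and $\CC_{H_L}$ comes with no candidate for the ``no-improvement element,'' no notion of sign, and no analogue of Kolmogorov's criterion. Until those objects are defined, there is nothing for the Carath\'eodory step to act on.

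Two further points. First, the ``sign-balancing argument'' that is supposed to convert $c(\CC_{H_L})$ into $c(\CC_{H_L})+1$ is pure analogy at this stage: in the polynomial case the extra point comes from requiring $0$ to lie in the \emph{positive} hull (relative interior) of the signed gradients, and a general convexity structure has neither positive combinations nor a relative interior, so this step currently has no content. Second, $\CC_{H_L}$ is a convexity on subsets of $H_L$, while $E(h)$ is a subset of $X$; writing $\co_{\CC_{H_L}}E(h)$ requires the identification $\x\mapsto g(\cdot,\x)$ of points of $X$ with elements of $L$, which you use implicitly but never state, and which matters because the signed evaluations you introduce are not themselves elements of $L$. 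None of this says the strategy is wrong --- it is the natural one, and it is consistent with the consistency check the paper performs for rational approximation, where $c(\CC_{H_L})+1=n+m+2$ matches Theorem~\ref{thm:equioscillation_characterization} with zero defect --- but the conjecture remains open after your argument, for the same reason it remains open in the paper.
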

We also conjecture that this is best possible, in the sense that the statement is generally not true if we replace $c(\CC_{H_L})+1$ with $c(\CC_{H_L})$.

If true, this conjecture could form the basis for algorithms for best approximation, such a generalisation of Vall\'ee-Poussin's procedure~\citep{valleepoussin:1911}. Later in this section,  we discuss an  example of families $L$ in the context of this conjecture.

One possible generalisation to polynomial approximation is rational approximation, that is, approximation by the ratio of two polynomials, whose coefficients are subject to optimisation. Note that in some cases, the degree of the denominator and numerator can be reduced without compromising the accuracy:
$$\frac{\sum_{j=0}^n a_jt_i^j}{\sum_{k=0}^{m}b_kt_i^k}=\frac{\sum_{j=0}^{n-\nu} a_jt_i^j}{\sum_{k=0}^{m-\mu}b_kt_i^k},$$
where $d=\min\{\nu,\mu\}$  is called \textit{the defect}. Then the necessary and sufficient optimality conditions are as follows~\cite{achieser1956}.
\begin{theorem} \label{thm:equioscillation_characterization}
A rational function in $\mathcal{R}_{n,m}$ with defect $d$ is the best polynomial approximation of a function $f \in C^0(I)$ (the space of continuous functions over $I$) if and only if there exists a sequence of at least $n+m+2-d$ points of maximal deviation where the sign of the deviation at these points alternates.
\end{theorem}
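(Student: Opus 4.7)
The plan is to prove both directions by a classical de la Vallée-Poussin-style argument, which sits naturally inside the abstract convex framework of the paper. Fix $r = p/q \in \mathcal{R}_{n,m}$ with defect $d$, normalised so that $\gcd(p,q)=1$, $q>0$ on $I$, and $\deg p\leq n-\nu$, $\deg q\leq m-\mu$ with $d=\min\{\nu,\mu\}$. The crucial observation is that the effective dimension of local variations of $r$ within $\mathcal{R}_{n,m}$ is $n+m+1-d$, so that the Carath\'eodory-type count from Conjecture~\ref{conjecture:caratheodori} produces exactly $n+m+2-d$ extremal points, matching the alternation count in the statement.

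For sufficiency I would argue by contradiction. Suppose an alternating sequence $t_0<t_1<\cdots<t_{N-1}$ of length $N=n+m+2-d$ exists and that some $r^{\ast}=p^{\ast}/q^{\ast}\in \mathcal{R}_{n,m}$ is strictly better. At each $t_i$, $|f(t_i)-r(t_i)|$ is maximal and alternates in sign with~$i$, while $|f(t_i)-r^{\ast}(t_i)|$ is strictly smaller, which forces $r(t_i)-r^{\ast}(t_i)$ to have the same sign as $r(t_i)-f(t_i)$. Multiplying by $q(t_i)q^{\ast}(t_i)>0$ shows that the polynomial $pq^{\ast}-p^{\ast}q$ alternates in sign on the $N$ ordered points and hence has at least $N-1=n+m+1-d$ zeros, contradicting the fact that $\deg(pq^{\ast}-p^{\ast}q)\leq n+m-d$.

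For necessity I would suppose that the longest alternating chain has length $k<N$ and partition the maximal-deviation set $E\subset I$ into $k$ maximal groups of constant sign of $f-r$. Interpolating zeros between consecutive groups produces a polynomial $\pi$ of degree at most $k-1\leq n+m-d$ having the sign of $r-f$ on each group. The function $\pi/q^{2}$ lies in the tangent cone of $\mathcal{R}_{n,m}$ at $r$, so for small $\varepsilon>0$ the perturbation $r+\varepsilon\pi/q^{2}$ is still in $\mathcal{R}_{n,m}$ (up to a small correction of higher order) and strictly decreases the error on a neighbourhood of $E$, contradicting optimality of $r$.

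The main obstacle is the rigorous handling of the defect $d$ in the necessity step: one has to check that $\pi/q^{2}$ can genuinely be realised as the derivative of a curve in $\mathcal{R}_{n,m}$, and that the local dimension count is exactly $n+m+1-d$ rather than $n+m+1$. The cleanest way is to parameterise $\mathcal{R}_{n,m}$ around $r$ by the $(n+1)+(m+1)-1-d$ independent coefficients left after quotienting by the scaling $(p,q)\mapsto(\lambda p,\lambda q)$ and by the cancellation of the common factor of degree~$d$; the tangent cone is then identified with the set of functions $(\alpha q-\beta p)/q^{2}$ for $\alpha,\beta$ of the appropriate reduced degrees, which is precisely where $\pi/q^{2}$ lives. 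Once this reduction is in place, both halves reduce to the abstract-convex alternation argument applied to a Haar-like system of dimension $n+m+1-d$.
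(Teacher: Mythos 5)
First, a point of reference: the paper does not prove this statement — Theorem~\ref{thm:equioscillation_characterization} is quoted from Achieser as a known classical result — so there is no in-paper argument to compare against; yours is the standard de la Vall\'ee-Poussin/Chebyshev alternation proof. Your sufficiency half is essentially complete and correct: the strict inequality $\lvert f(t_i)-r^{\ast}(t_i)\rvert<\lvert f(t_i)-r(t_i)\rvert$ does force $r-r^{\ast}$ to carry the sign of $r-f$ at each $t_i$, and clearing denominators gives a polynomial $pq^{\ast}-p^{\ast}q$ of degree at most $n+m-d$ with $n+m+1-d$ sign changes. Just state the last step precisely: the zero count does not ``contradict the degree bound'' outright — it forces $pq^{\ast}-p^{\ast}q\equiv 0$, hence $r^{\ast}=r$, which contradicts the assumed strict improvement.

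The necessity half has a genuine gap, which you yourself flag but do not close: you assert that $\pi/q^{2}$ ``lies in the tangent cone'' and that the functions $(\alpha q-\beta p)/q^{2}$ are ``precisely where $\pi/q^{2}$ lives'', but the crux is the unproved lemma that for coprime $p,q$ with $\deg p\le n-\nu$, $\deg q\le m-\mu$, $d=\min\{\nu,\mu\}$, one has $q\,\mathcal{P}_{n}+p\,\mathcal{P}_{m}=\mathcal{P}_{n+m-d}$ (a dimension count: coprimality gives $q\mathcal{P}_{n}\cap p\mathcal{P}_{m}=pq\,\mathcal{P}_{d}$, of dimension $d+1$, so the sum has dimension $n+m+1-d$ and sits inside $\mathcal{P}_{n+m-d}$, forcing equality). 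With that lemma, write $\pi=\alpha q-\beta p$ and use the genuine competitor $(p+\varepsilon\alpha)/(q+\varepsilon\beta)\in\mathcal{R}_{n,m}$, whose difference from $r$ is $\varepsilon\pi/\bigl(q(q+\varepsilon\beta)\bigr)$; this replaces the vague ``up to a small correction of higher order''. Two further repairs are needed: there is a sign slip — for $r+\varepsilon\pi/q^{2}$ to reduce the error at the extremal points, $\pi$ must carry the sign of $f-r$, not of $r-f$; and ``strictly decreases the error on a neighbourhood of $E$'' requires the standard compactness step (outside a neighbourhood of $E$ the deviation is at most $E-\delta$, so an $O(\varepsilon)$ perturbation stays below $E$; inside, the sign condition gives strict decrease — this matters because near the interpolated zeros of $\pi$ the perturbation has the wrong sign). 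Finally, the appeal to Conjecture~\ref{conjecture:caratheodori} is purely motivational and does no work in the argument.
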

 Therefore, similar to polynomial approximation, these conditions are bases the number of alternating points. 

In connection to Conjecture~\ref{conjecture:caratheodori}, $c(\CC_{H_L})$ is $n+m+2$. If $n$ and $m$ are the degrees of the corresponding polynomials, then the total number of the parameters is $n+1+m+1=n+m+2$. However, one of the parameters can be fixed (otherwise one can divide the numerator and denominator by the same number). Therefore, there are $n+m+1$ ``free'' parameters and we add one more for~ $c(\CC_{H_L})$ (similar to Carath\'eodory's theorem from classical convex analysis). We will provide a formal proof in a future paper.

Rational approximation was a very popular research area in the 1950s-70s~\citep{achieser1956,Boehm1964,Meinardus1967rational,Ralston1965Reme,Rivlin1962} (just to name a few).

If the basis functions are not restricted to monomials, the approximations are called generalised rational approximations. The term is due to Cheney and Loeb~\citep{cheney1964generalized}.

One way to approach rational approximation is through constructing   ``near optimal'' solutions~\citep{Trefethen2018}. This approach is very efficient and therefore very popular. The extension of this approach to non-monomial basis functions and multivariate approximation remains open (an extension to complex domains can be found in~\citep{Trefethen2020}). 

An alternative way is based on modern optimisation techniques. This approach is preferable when the basis function are not restricted to univariate monomials and when deeper theoretical study is required. The corresponding optimisation problems are quasiconvex and can be solved using a general quasiconvex optimisation methods.

Rational and generalised rational functions are quasiaffine as ratios of linear forms~\citep{SL,loeb1960,amcpeirissukhsharonugon}. It can be solved efficiently by applying a simple, but robust approach, called the bisection method for quasiconvex functions~\citep{SL}.

In~\citep{amcpeirissukhsharonugon} the authors use a well-known bisection method for quasiconvex functions (see \citep{SL}) to solve these problems. In \citep{diazsukhorugon}, the authors use a projection-type algorithm for solving these problems. 

\subsection{Bisection method}

Bisection method for quasiconvex functions~\citep{SL} can be used efficiently for rational and generalised rational   approximation, including multivariate settings~\citep{MultivariateRational, amcpeirissukhsharonugon}. Can we use abstract convexity to extend this method to a wider class of approximations?

 The problem can be formulated as follows:
\begin{equation}\label{eq:problem_obj}
\text{minimise}~\tilde{z}
\end{equation}
subject to
\begin{equation}\label{eq:problem_con1}
f(\x)-\frac{\A^T\G(\x)}{\B^T\H(\x)}\leq \tilde{z},~\x\in X,
\end{equation}
\begin{equation}\label{eq:problem_con2}
\frac{\A^T\G(\x)}{\B^T\H(\x)}-f(\x)\leq \tilde{z},~\x\in X,
\end{equation}
\begin{equation}\label{eq:positivity}
\B^T\H(\x)\geq\delta,~\x\in X,
\end{equation}
where $\tilde{z}$ is the maximal deviation and $\delta$ is a small positive number. Problem~(\ref{eq:problem_obj})-(\ref{eq:positivity}) is not linear.

The idea of this method is based on the fact that all the sublevel sets of quasiconvex functions are convex and therefore the sublevel sets of quasiaffine functions are half-spaces. Essentially, this means that the constraint set~(\ref{eq:problem_con1})-(\ref{eq:positivity}) for a fixed~$\tilde{z}$ is an intersection of a finite number of half-spaces ($X$ is a finite grid) and therefore it is a polytope.  

The algorithm starts with an upper and lower bound ($u$ and $l$) for the maximal deviation, then the sublevel set for the maximal deviation at the level~$z=\frac{u+l}{2}$ is a convex set and the algorithm checks if this set is empty. If it is empty, then the upper bound  is updated to~$z$, otherwise, the lower bound is updated to~$z$. The algorithm terminates when the upper and lower bounds are within the specified precision.

In general, checking if the convex set (sublevel set of the maximal deviation) is empty or not may be a difficult task (convex feasibility problems). There are a number of efficient methods (\citep{BauschkeLewis, Shi-yaXu, YangYang, Zaslavski, Zhao} just to  name a few), but there are still several open problems here. The discussion of  these problems  is out of scope of the current paper.

In the case of multivariate generalised rational approximation, however, this problem (convex feasibility) can be reduced to solving a linear programming problem~\citep{MultivariateRational}.  The denominator of the approximation does not change the sign. Assume, for simplicity, that it is positive, then the problem of checking the feasibility  is equivalent to solving the following problem:
\begin{equation}\label{eq:problemLP_obj_ax}
\text{minimise}~\tilde{u}
\end{equation}
subject to
\begin{equation}\label{eq:problemLP_con_ax1}
f(\x){\B^T\H(\x)}-{\A^T\G(\x)}\leq z{\B^T\H(\x)}+\tilde{u},~\x\in X,
\end{equation}
\begin{equation}\label{eq:problemLP_con_ax2}
{\A^T\G(\x)}-f(\x){\B^T\H(\x)}\leq z{\B^T\H(\x)}+\tilde{u},~\x\in X,
\end{equation}
\begin{equation}\label{eq:positivityLPax}
\B^T\H(\x)\geq\delta,~\x\in X,
\end{equation}
where $z=\frac{1}{2}(u+l)$ is the current bisection point (bisecting the possible values of maximal deviation). 

If an optimal solution $\tilde{u}\leq 0$, the corresponding sublevel set of the maximal deviation function is not empty (update the upper bound), otherwise the set is empty (update the lower bound). If $X$ is a finite grid, then (\ref{eq:problemLP_obj_ax})-(\ref{eq:positivityLPax}) is a linear programming problem and can be solved efficiently at each step of the bisection method.

To summarise, an efficient implementation of the bisection method for quasiconvex functions,  can be extended to approximation by any type of quasiaffine functions, since at each step one needs to check if a polytope (intersection of a finite number of half-spaces) is non-empty. The main problem is how to find this polytope. In the case of rational and generalised rational approximation, this problem is simple, but not for some other types of quasiaffine approximations.  In section~\ref{sec:numerical_experiments} we give more examples where the construction of this polytope is straightforward (composition of monotone and affine or ratios of affine functions). 

One possibility for constructing sublevel sets for smooth quasiaffine approximation is to use their gradients (if it is not vanishing). In the rest section, we give an example demonstrating that this approach may be complicated if there are constraints (even simple linear constraints). In particulra, this approach will not work even in the case of rational approximation due to the requirement for the denominator to be strictly positive.

\subsection{Approximation by a quasiaffine function}\label{sub:ApproxByQuasiaffine}

Suppose that instead of a generalised rational approximation one needs to approximate by a function from a different class, but the approximations are quasiaffine functions with respect to the decision variables. Then the problem is as follows:

\begin{equation}\label{eq:problem_obj_quasi}
\text{minimise}~\tilde{z}
\end{equation}
subject to
\begin{equation}\label{eq:problem_con1_quasi}
f(\x)-g(A,\x)\leq \tilde{z},~\x\in X,
\end{equation}
\begin{equation}\label{eq:problem_con2_quasi}
g(A,\x)-f(\x)\leq \tilde{z},~\x\in X,
\end{equation}
where $g(A,\x)$ is a quasiaffine function. Since the sublevel sets of quasiaffine functions are half-spaces, the constraint set~(\ref{eq:problem_con1_quasi})-(\ref{eq:problem_con2_quasi}) is a polytope, since $X$ is a finite grid.

\begin{remark}
A finite number of linear constraints may be added to~(\ref{eq:problem_obj_quasi})-(\ref{eq:problem_con2_quasi}), while the constraint set remains a polytope. 
\end{remark}

Therefore, the constraint set (for any fixed $\tilde{z}$), with or without additional linear constraints, is a poytope and we only need to check if this polytope is non-empty. However, we still need an efficient approach for finding this polytope. In the case of smooth functions, one can use the gradient as a possible normal vector to the hyperplanes, providing that it is not a zero vector. In section~\ref{sec:numerical_experiments} we study the examples where the quasiaffine approximations can be decomposed as a composition of strictly monotone and affine (quasiaffine) functions. Other situations may be harder.

Note that if a function is quasiaffine on the whole space, then the sublevel sets are half-spaces, whose boundary hyperplanes  are parallel. Otherwise, two hyperplanes will intersect. If there are additional (even linear) constraints, this observation may not be valid anymore. The following example illustrates that this may happen even in a very simple case of two variables. 

\begin{example}
Consider $f(x,y)=\frac{x}{y}$, where $y>0$. The sublevel sets are
$$S_{\alpha}=\{(x,y): \frac{x}{y}\leq \alpha, y>0\},$$
where $\alpha$ is a given real number. Then $S_{\alpha}$ can be described as 
$$\{x-\alpha y\leq 0,\quad y>0\}.$$
Each sublevel set is still a half-space, but the corresponding hyperplanes (in this example they correspond to level sets) are not parallel. These hyperplanes intersect at $(0,0)$, but this point is excluded from the domain due to the requirement for the denominator~$y$ to be strictly positive.

\end{example}

This example demonstrates that the computation of sublevel sets for quasiaffine functions may be challenging. In the next section we give some examples of the classes of approximations that can be handled efficiently. We also provide the results of the numerical experiments.

\section{Numerical experiments}\label{sec:numerical_experiments}

\subsection{Strictly monotone function in composition with an affine function.}
In this example we approximate function
\begin{equation}\label{eq:f_example_affine}
f(x,y)=(-x+y^3+x^4)^4
\end{equation}
by a quasiaffine function in the form
\begin{equation}\label{eq:g_example_affine}
g(A,x,y)=(a_1+a_2x+a_3y+a_4x^2+a_5y^2+a_6xy)^3,
\end{equation}
where $A=(a_1,a_2,a_3,a_4,a_5,a_6)$ are the decision variables, $x$ and $y$ form a grid on $[-1,1]$ with the step-size~0.1. The optimal coefficients are (rounded to two decimal places):
$$A=(-1.88,   -0.75,   0.31,    3.29,    0.98,   -0.74),$$
the maximal absolute deviation is~8.01 (to two decimal places).

Figure~\ref{fig:fun} represents the function~$f(x,y)$, figure~\ref{fig:approx1} depicts the best found approximation and figure~\ref{fig:dev1} contains the deviation function $f(x,y)-g(A,x,y)$, which is the error of approximation.

Function $f(x,y)$ is almost flat with an abrupt increase around point~$(-1,1)$. Visually, the approximation resembles the shape of the original function and the magnitude of the deviation confirms that the approximation is reasonably good.  

\begin{figure}[h]
\centering 
\includegraphics[width=0.8\textwidth]{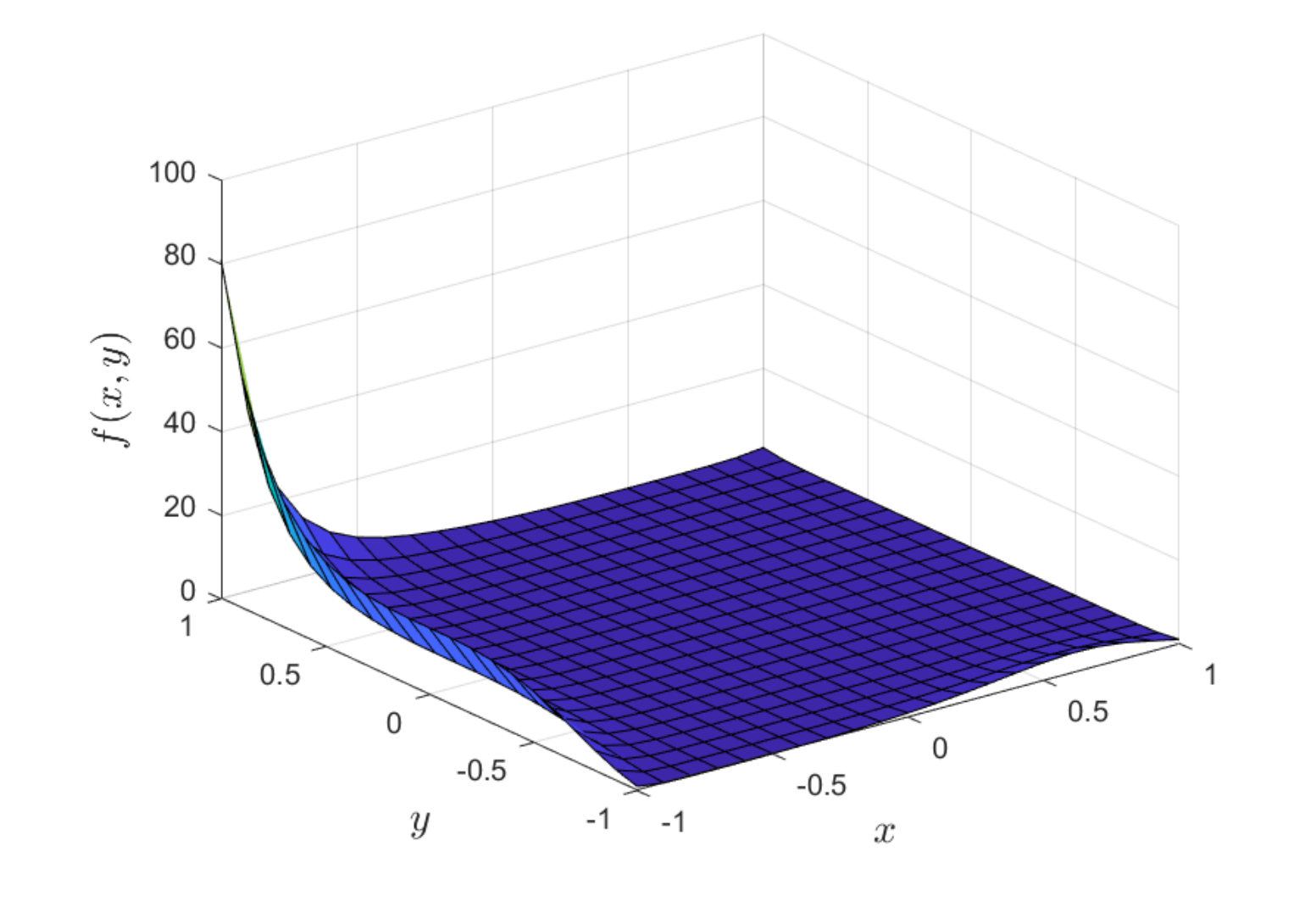}
\caption{Function $f(x,y)=(-x+y^3+x^4)^4$.}
\label{fig:fun}
\end{figure}

\begin{figure}[h]
\centering 
\includegraphics[width=0.8\textwidth]{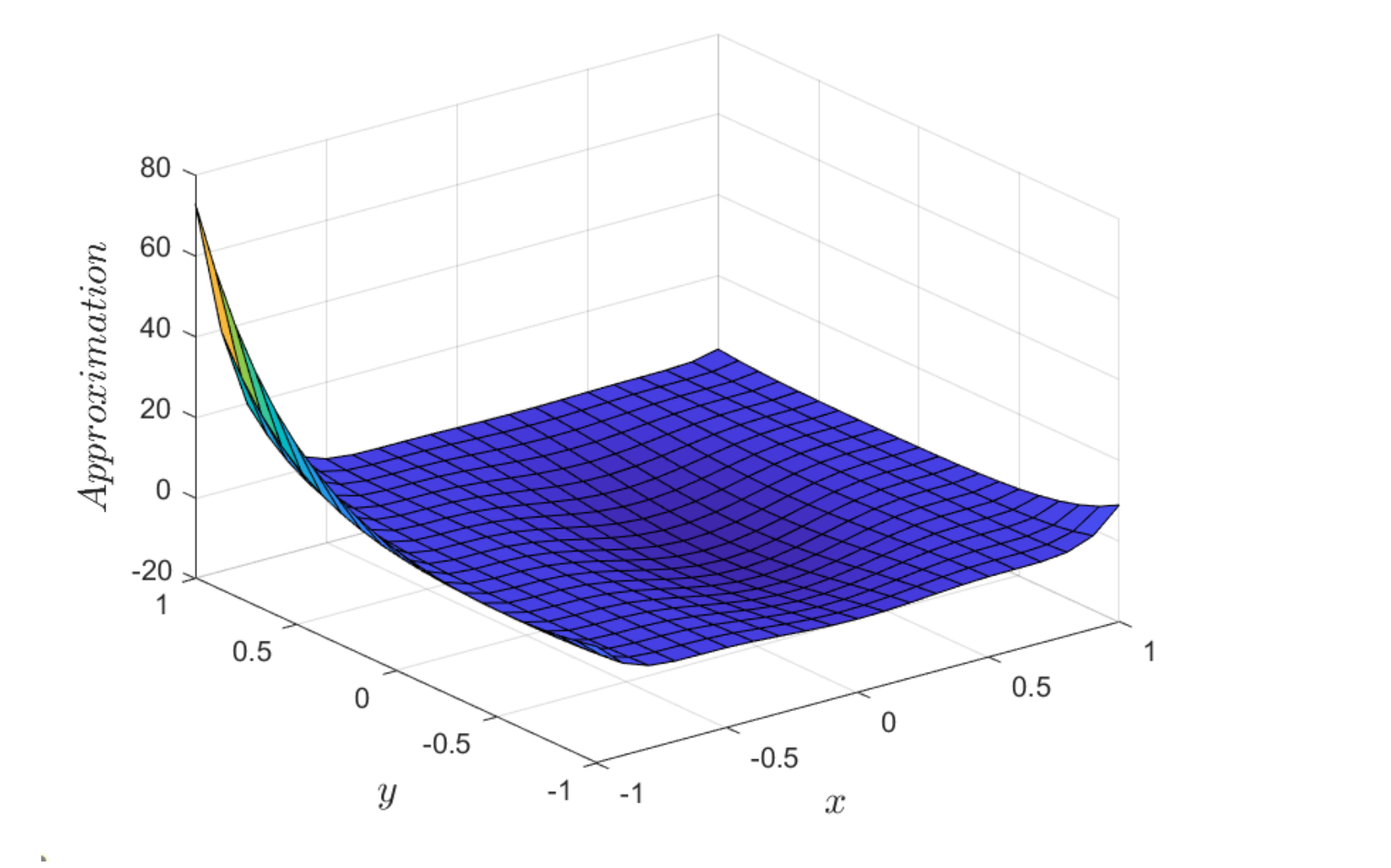}
\caption{Approximation $g(A,x,y)=(a_1+a_2x+a_3y+a_4x^2+a_5y^2+a_6xy)^3$.}
\label{fig:approx1}
\end{figure}

\begin{figure}[h]
\centering 
\includegraphics[width=0.8\textwidth]{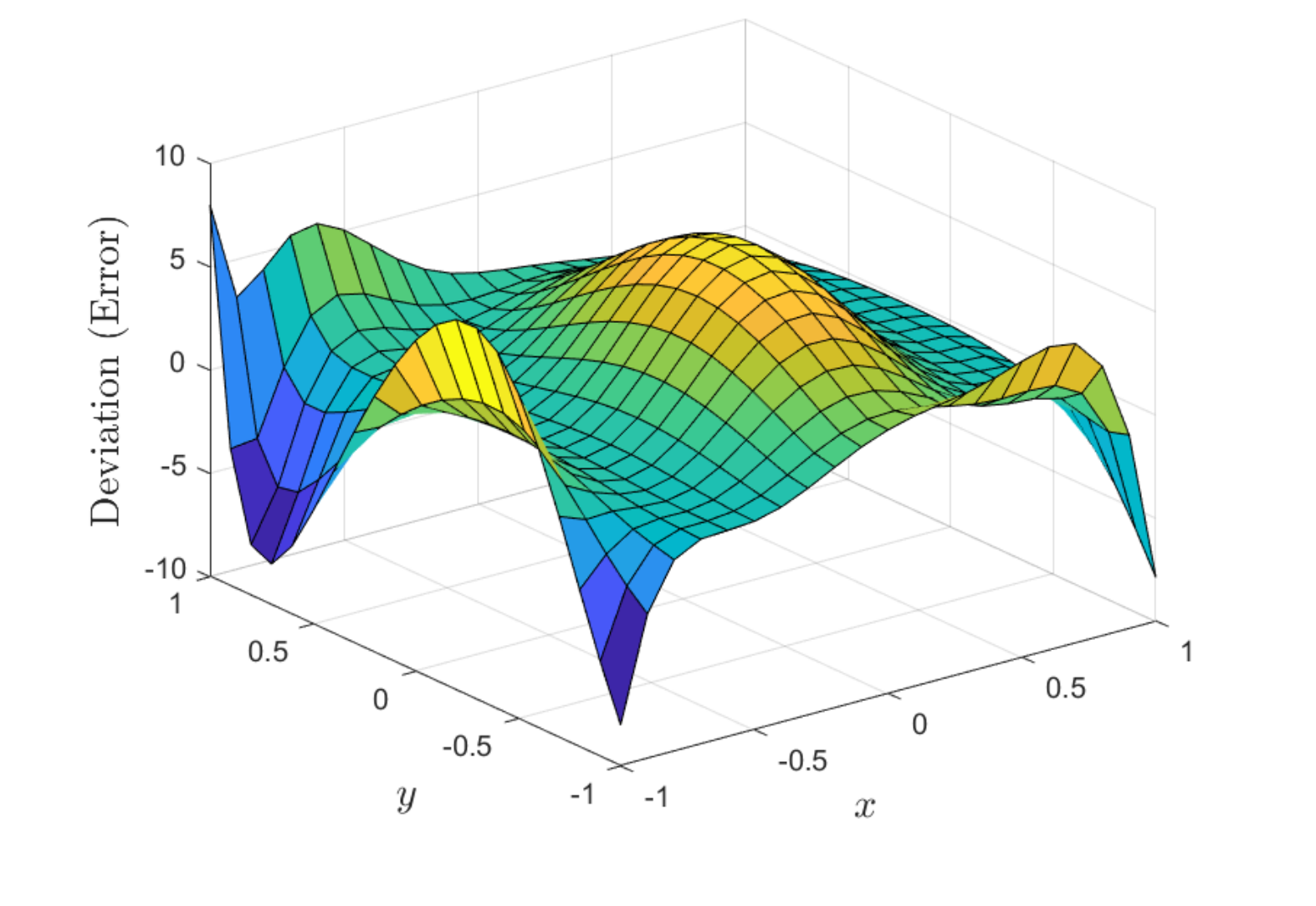}
\caption{Deviation (error) function  $$f(x,y)-g(A,x,y)=(-x+y^3+x^4)^4-(a_1+a_2x+a_3y+a_4x^2+a_5y^2+a_6xy)^3.$$}
\label{fig:dev1}
\end{figure}

\subsection{Strictly monotone function in composition with a rational function.}
In this example we approximate the same  function
\begin{equation}\label{eq:f_example_affine}
f(x,y)=(-x+y^3+x^4)^4
\end{equation}
by a quasiaffine function in the form
\begin{equation}\label{eq:g_example_affine}
g_1(A,x,y)=\left(\frac{a_1+a_2x+a_3y+a_4x^2+a_5y^2}{1+a_6xy}\right)^3,
\end{equation}
where, similar to the previous example, $A=(a_1,a_2,a_3,a_4,a_5,a_6)$ are the decision variables (the number of decision variables is the same as in the previous example), $x$ and $y$ are on a grid  $[-1,1]$ with the step-size~0.1. The purpose of approximation is to use lower degree polynomials in the composition. The optimal coefficients are (rounded to two decimal places):
$$A=(-1.66,   -0.93,   1.05,   2.87,  0.98,  1),$$
the maximal absolute deviation is~11.48 (to two decimal places).

Figure~\ref{fig:approx2} contains the best found approximation and figure~\ref{fig:dev2} depicts the deviation function $f(x,y)-g_1(A,x,y)$, which is the error of approximation.

The new approximation still resembles the shape of the original function, but the maximal deviation is higher.

\begin{figure}[h]
\centering 
\includegraphics[width=0.8\textwidth]{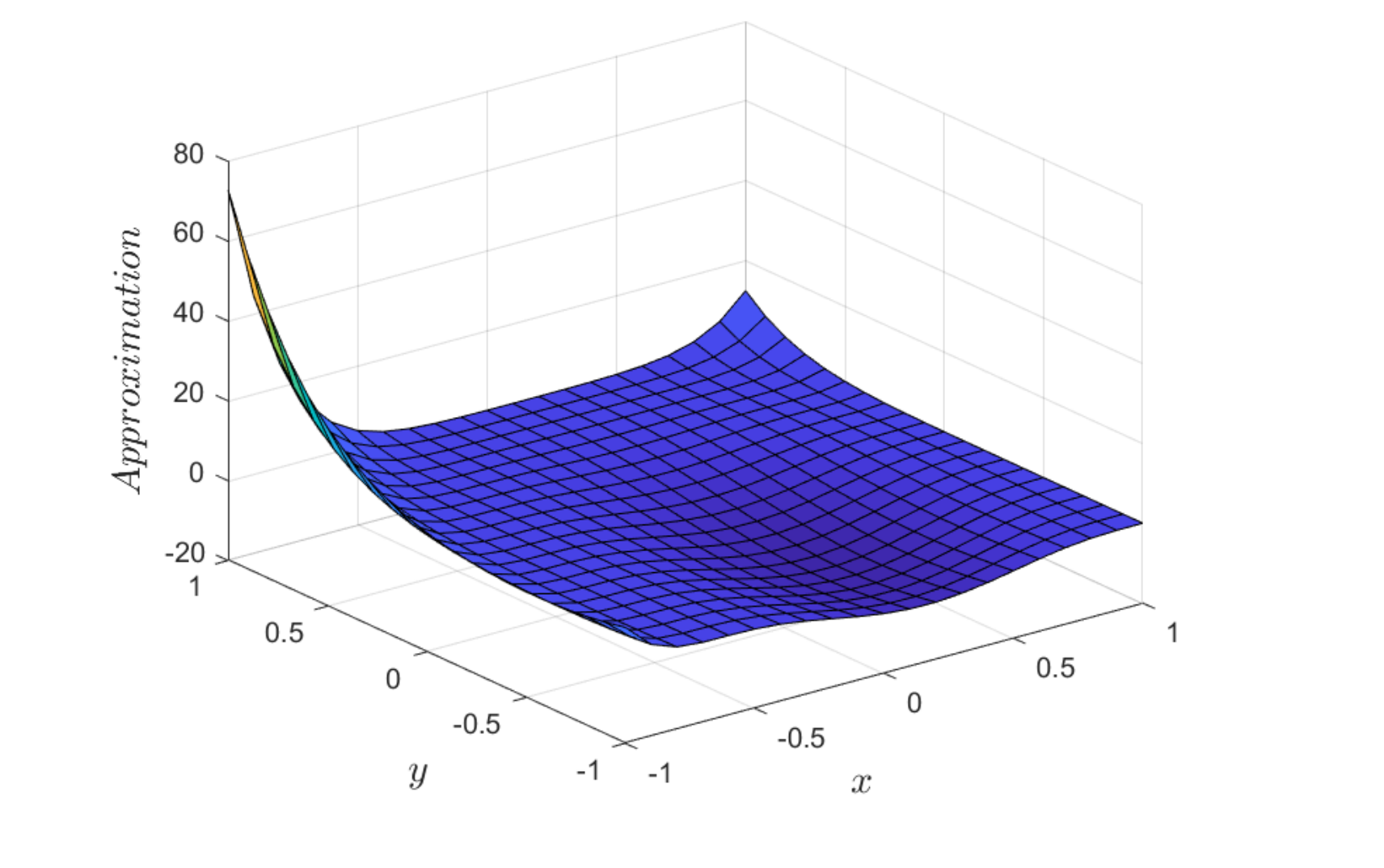}
\caption{Approximation $g_1(A,x,y)=(\frac{a_1+a_2x+a_3y+a_4x^2+a_5y^2}{1+a_6xy})^3$.}
\label{fig:approx2}
\end{figure}

\begin{figure}[h]
\centering 
\includegraphics[width=0.8\textwidth]{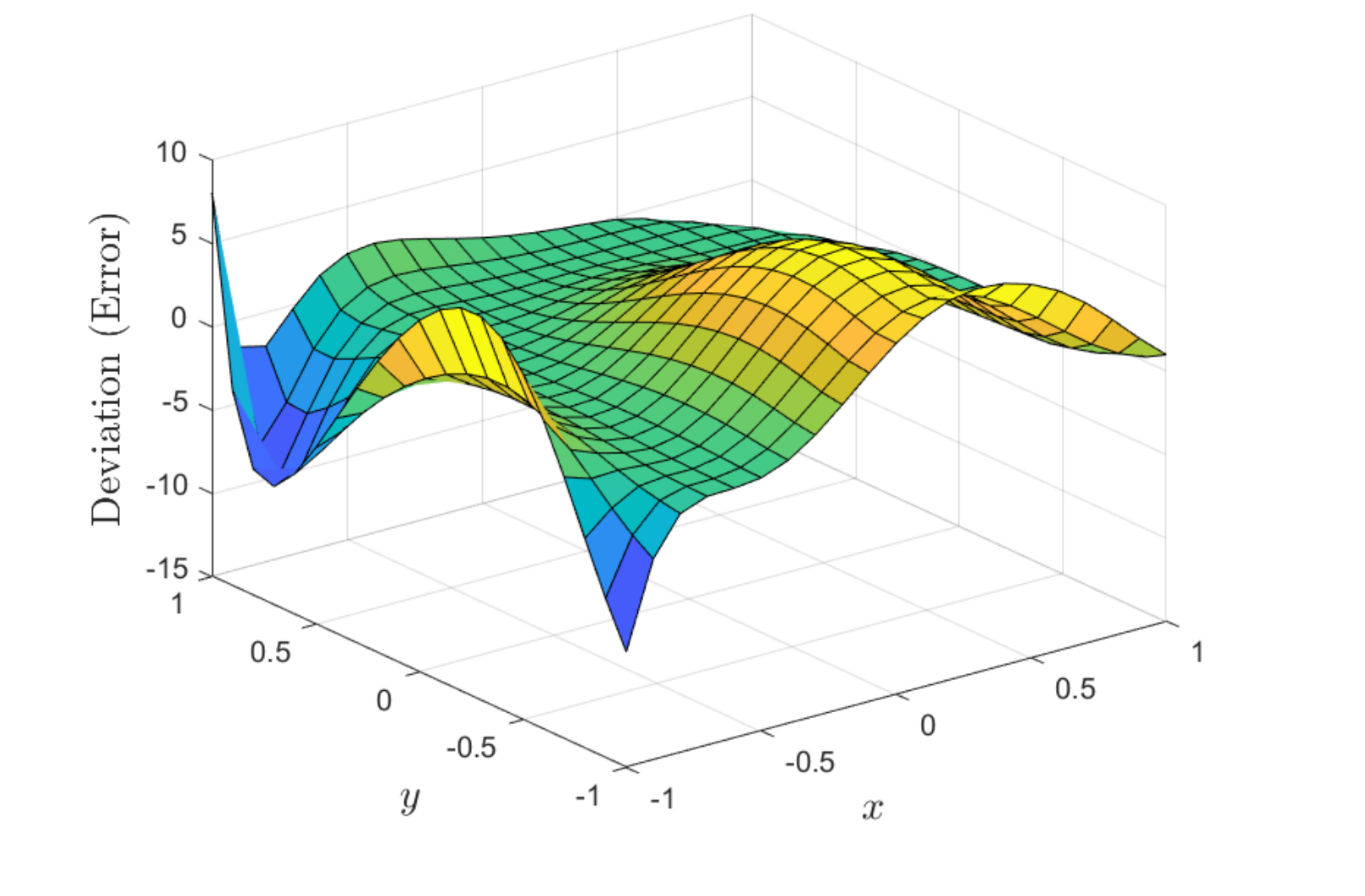}
\caption{Deviation (error) function  $$f(x,y)-g(A,x,y)=(-x+y^3+x^4)^4-\left(\frac{a_1+a_2x+a_3y+a_4x^2+a_5y^2}{1+a_6xy}\right)^3.$$}
\label{fig:dev2}
\end{figure}
\subsubsection{Deep learning applications.}

At first glance, the class of a composition of a monotone univariate function with an affine  or rational function is restrictive. However, it has many practical applications, including deep learning, where compositions of univariate activation functions and affine transformations are the main components or the models~\citep{Goodfellow2016}. Most common choices for activation functions are sigmoid functions, ReLU and Leaky ReLU (the last two functions are simply non-decreasing piecewise linear functions with two linear pieces).   

\section{Conclusions}\label{sec:conc}
The goal of this study is to demonstrate that abstract convexity (in the sense ``convexity without linearlity'') has several applications in different branches of mathematics, including approximation theory. This application appears naturally in the new settings.

The results of the numerical experiments demonstrate that our approach is computationally efficient. The applications lead to practical applications as well. One potential application is data science and deep learning.

We also touched the connections between ``abstract convexity'' and ``axiomatic convexity''. These areas have many overlappings that can be seen as a new way of looking at the problems, in particular,  function approximation.
\vskip 6mm
\noindent{\bf Acknowledgements}

\noindent
This research was supported by the Australian Research Council (ARC),  Solving hard Chebyshev approximation problems through nonsmooth analysis (Discovery Project DP180100602).

\newpage

\bibliographystyle{plainnat}
\bibliography{references}
\end{document}